\documentclass[11pt]{amsart} 
\usepackage[left=3.5cm,right=3.5cm,top=3cm,bottom=3cm]{geometry} 
\usepackage{amsfonts,amssymb,amsthm,amsmath,amscd}
\usepackage{mathrsfs}
\usepackage{enumerate}
\usepackage{tikz}
\usepackage{apptools}
\usepackage{mathtools}
\usetikzlibrary{graphs}
\title{$\Delta$-Y Moves and Partial Steiner Systems}

\usepackage{graphicx}

\usepackage[dvipsnames]{xcolor}
\definecolor{OrientalRed}{RGB}{192, 44, 56}
\definecolor{OrientalGreen}{RGB}{80, 112, 71}
\definecolor{OrientalBlue}{RGB}{31, 54, 150}
\definecolor{OrientalYellow}{RGB}{228, 169, 77}
\usepackage[backref=page]{hyperref}
\hypersetup{
    colorlinks  = true,     
    urlcolor    = OrientalRed,   
    linkcolor   = OrientalRed,      
    citecolor   = OrientalBlue,     
}

\usepackage{relsize}

\usepackage{nameref}

\newtheorem{theorem}{Theorem}
\newtheorem{lemma}[theorem]{Lemma}

\newtheorem{proposition}[theorem]{Proposition}
\theoremstyle{definition}
\newtheorem{definition}[theorem]{Definition}

\usepackage{xparse}

\NewDocumentEnvironment{proofof}{m o}
{%
    \IfValueT{#2}{%
        \renewcommand{\qedsymbol}{%
            \ensuremath{\square\;\text{\normalfont\scriptsize(#2)}}%
        }%
    }%
    \proof[Proof of #1]%
}
{%
    \endproof
}

\newcommand{\Fstr}{\mathcal{F}_{\text{str}}}
\newcommand{\Gstr}{\mathcal{G}_{\text{str}}}

\begin{document}

\begin{abstract}
We study graph systems generated by $\Delta$-Y and Y-$\Delta$ moves and prove that, for initial graphs of minimum degree at least $7$, $\Delta$-Y moves alone generate every reachable graph. As a consequence, for $n\ge 8$, the $\Delta$-Y system generated from the complete graph of $n$ vertices is canonically isomorphic as graded directed graphs to the space of partial Steiner systems on $n$ points.
\end{abstract}

\author{Zhichen Zhou}
\address{Department of Mathematics, Stockholm University}
\email{zhichen.zhou@math.su.se}
\date{September 8, 2026}

\subjclass[2020]{Primary 05C76; Secondary 05B07, 05C75}
\keywords{$\Delta$-Y transformation, Y-$\Delta$ transformation,
partial Steiner system, graph operation}

\maketitle

\section{Introduction}
Let $G$ be a simple graph. A $\Delta$-Y move at a triangle $T\subset G$ replaces the three edges of $T$ by a new degree-$3$ vertex adjacent to exactly the three vertices of $T$ while a Y-$\Delta$ move is the inverse of it. The $\Delta$-Y system $\mathcal{G}(G)$ generated by $G$ consists of all graphs that can be obtain from $G$ via a finite sequence of $\Delta$-Y or Y-$\Delta$ moves.

$\Delta$-Y and Y-$\Delta$ moves have appeared in several contexts in graph theory. They are historically motivated by electric networks, and the need for their simplification motivates the theory of $\Delta$-Y/Y-$\Delta$ reducibility, as studied by \cite{ArchdeaconColbournGitlerProvan2000,Yu2006,GitlerSagols2011,Wagner2015}. This idea is intertwined with topological graph theory in the study of spatial ($3$-dimensional) graph embeddings \cite{HanakiEtAl2011,NikkuniTaniyama2012}, which are related to the Conway-Gordon theorems \cite{ConwayGordon1983}, and further of $4$-dimensional graph embeddings as in \cite{VanDerHolst2006,GeorgakopoulosWinter2024}. The moves have also been used as a proof technique in \cite{AichholzerEtAl2014}.

Partial Steiner (triple) systems, equivalently $3$-uniform linear hypergraphs, are standard objects in design theory, see Chapter 9 of \cite{ColbournRosa1999} and Chapter 12 of \cite{HenningYeo2020}. They have been studied from a variety of perspectives, for example, extremal theory as in \cite{ColbournRosaZnam1993}, embeddings as in \cite{BryantHorsley2009}, and probabilistic theory as in \cite{KostochkaRodl1998}.

Given a degree-$3$ vertex $v\in G$ with neighbours $x,y,z\in G$, there are three conventions to define the Y-$\Delta$ move at $v$:
\begin{enumerate}
    \item \label{item:YDelta1}remove $v$ and add edges $xy,yz,zx$, allowing the resultant graph to have multiple edges, such as in \cite{Yu2006}, \cite{GitlerSagols2011}, and \cite{ArchdeaconColbournGitlerProvan2000};
    \item do the same as \ref{item:YDelta1} but then remove parallel edges, forcing the resultant graph to remain simple, such as in \cite{AichholzerEtAl2014};
    \item \label{item:YDelta3}do the same as \ref{item:YDelta1} but only when the neighbourhood of $v$ forms an independent set, as in \cite{HanakiEtAl2011} and \cite{NikkuniTaniyama2012}.
\end{enumerate}

Our Lemma \ref{lem: strong-move sufficiency} unifies these three conventions in the case where we start with graph $G$ of minimum degree $\delta(G) \ge 7$: the system $\mathcal{G}(G)$ consists of simple graphs even if we adapt the first convention, thus there is no parallel edge to remove if we adapt the second convention, and thus both are equivalent with the third convention. We will define a Y-$\Delta$ move in the second convention and call it a \emph{strong} Y-$\Delta$ move (See Definition \ref{def:strongDeltaY}) if condition in \ref{item:YDelta3} is satisfied.

In this paper, we identify the systems $\mathcal{G}(G)$ arising from $\Delta$-Y moves with partial Steiner systems. We note that the $\Delta$-Y move locally encodes the incidence data of a triangle inside the graph itself. Iterating this move gives a graph-theoretic record of a collection of triangles. The idea of this paper is that, assuming a minimum degree condition $\delta(G) \ge 7$, such record is a faithful one.

\subsection*{Main Results}

A \emph{partial Steiner system} on $n$ points is a collection $\mathcal A \subset \binom{[n]}{3}$ of blocks of distinct triples chosen from $1,\dots,n$ such that no pair of points belongs to two blocks in $\mathcal A$. A collection of partial Steiner systems is naturally a graded directed graph where a vertex is assigned to each system $\mathcal{A}$, and a forward edge is given by adjoining one block, where the grading is given by size $|\mathcal{A}|$. We denote the graded directed graph of all partial Steiner systems on $n$ vertices by $\mathcal{P}(n)$.

Let $K_n$ be the complete graph on $n$ vertices. We show that $\mathcal{G}(K_n)$ naturally carries the structure of a directed graph graded by vertex number and is identified to partial Steiner systems as follows.

\begin{theorem}[partial Steiner system identification]\label{thm: partial Steiner system identification}
For every $n\ge 8$, there is a canonical $S_n$-equivariant isomorphism of
graded directed graphs
\[
    \Phi_n:\mathcal{G}(K_n)\xrightarrow{\sim}\mathcal P(n),
\]
given by
\[
    \Phi_n(H)
    =
    \{\text{neighbourhood of }c: c\in H \text{ is a vertex created by some $\Delta$-Y move}\,\}.
\]
\end{theorem}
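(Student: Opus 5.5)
We would define the map in the reverse direction first. To a partial Steiner system $\mathcal A\subset\binom{[n]}{3}$ we associate the graph $\Psi_n(\mathcal A)$. Its vertex set is $[n]\sqcup\{c_B : B\in\mathcal A\}$. Its edges are all pairs $ij$ with $\{i,j\}$ not covered by any block, together with the three edges from $c_B$ to the points of $B$. Linearity of $\mathcal A$ is exactly what makes this well defined. Each pair of old vertices lies in at most one block, so for every block $B$ the triangle on $B$ is still present in $K_n$ minus the edges covered by the other blocks. Hence $\Psi_n(\mathcal A)$ is obtained from $K_n$ by performing $\Delta$-Y moves at the blocks of $\mathcal A$ in any order, and $\Psi_n(\mathcal A)\in\mathcal G(K_n)$. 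A forward edge $\mathcal A\to\mathcal A\cup\{B\}$ of $\mathcal P(n)$ corresponds to one $\Delta$-Y move at the triangle $B$ in $\Psi_n(\mathcal A)$. That triangle is present precisely because $B$ shares no pair with any block of $\mathcal A$. The grading matches: $|V|=n+|\mathcal A|$.

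The substantive direction is to show that every $H\in\mathcal G(K_n)$ has the form $\Psi_n(\mathcal A)$, and that each move in $\mathcal G(K_n)$ corresponds to adding or removing a block. We invoke the main result of the paper, that for $\delta(G)\ge 7$ $\Delta$-Y moves alone reach every graph of $\mathcal G(G)$, together with Lemma~\ref{lem: strong-move sufficiency}; $K_n$ with $n\ge 8$ has $\delta=n-1\ge 7$. Then $H$ is obtained from $K_n$ by a sequence of $\Delta$-Y moves only, and we prove by induction along the sequence that every intermediate graph is $\Psi_n(\mathcal A)$ for some linear $\mathcal A$. For the induction step we show that the triangles of $\Psi_n(\mathcal A)$ are exactly the triangles on uncovered triples of old vertices. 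New vertices have degree $3$, and their neighbourhood $B$ contains no edges, since the pairs of $B$ are covered. So a new vertex lies in no triangle, and any triangle uses three old vertices with all three pairs uncovered. A $\Delta$-Y move there produces $\Psi_n(\mathcal A\cup\{B\})$ with $\mathcal A\cup\{B\}$ still linear.

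Next we must confirm that $\Phi_n$ is well defined and inverse to $\Psi_n$. The set of created vertices must be intrinsic to $H$; we identify them as the degree-$3$ vertices. This works because old vertices have degree at least $n-1-\lfloor (n-1)/2\rfloor\cdot 2+\lfloor(n-1)/2\rfloor\ge (n-1)/2\ge 4$ for $n\ge 8$. So old vertices never have degree $3$, the created vertices are determined by $H$, and $\Phi_n\circ\Psi_n=\mathrm{id}$. Surjectivity of $\Psi_n$ onto $\mathcal G(K_n)$ gives $\Psi_n\circ\Phi_n=\mathrm{id}$. For edges of $\mathcal G(K_n)$, a Y-$\Delta$ move at a degree-$3$ vertex $c_B$ yields $\Psi_n(\mathcal A\setminus\{B\})$. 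So every edge in $\mathcal G(K_n)$, oriented by increasing vertex count, is a $\Delta$-Y move, and it corresponds to a block adjunction. $S_n$-equivariance is immediate because $\Psi_n$ only uses labels on $[n]$.

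One point needs care. The graphs in $\mathcal G(K_n)$ are presumably considered with the original vertices labelled by $[n]$ and the created vertices unlabelled. Canonicity, and the fact that distinct $\mathcal A$ give distinct vertices of $\mathcal G(K_n)$, depend on this convention, and we would fix it explicitly. The main obstacle is the step where Y-$\Delta$ moves are excluded. Without the paper's theorem that $\Delta$-Y moves suffice, one would have to control Y-$\Delta$ moves at original vertices that have temporarily dropped to degree $3$. Our degree count shows this cannot happen within the $\Psi_n$ family, so if needed we could run the induction over arbitrary move sequences directly, using this degree bound in place of the cited result.
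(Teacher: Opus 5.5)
Your proof is correct. Its skeleton matches the paper's: the explicit inverse $\Psi_n$, then $\Phi_n\circ\Psi_n=\mathrm{id}$, surjectivity of $\Psi_n$ giving $\Psi_n\circ\Phi_n=\mathrm{id}$, and $\Delta$-Y moves matched with block adjunctions.

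The two arguments differ in how they show that every $H\in\mathcal G(K_n)$ equals $\Psi_n(\mathcal A_H)$. The paper derives this as a corollary of its general machinery for arbitrary $G$ with $\delta(G)\ge 7$. That machinery is Lemma~\ref{lem: triangle blocks}, which rests on Theorem~\ref{thm: forward-move sufficiency}, Proposition~\ref{prop:vertex-type_separation} and the long case analysis of Lemma~\ref{lem: strong-move sufficiency}.

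You instead check directly that the explicit family $\{\Psi_n(\mathcal A)\}$ is closed under both kinds of move. A created vertex $c_B$ has independent neighbourhood $B$, so it lies in no triangle. Hence every $\Delta$-Y move is at an uncovered triple of old vertices and yields $\Psi_n(\mathcal A\cup\{B\})$. An old vertex lying in $k$ blocks has degree $n-1-k\ge n-1-\lfloor (n-1)/2\rfloor=\lceil (n-1)/2\rceil\ge 4$. So every Y-$\Delta$ move is at some $c_B$ and returns $\Psi_n(\mathcal A\setminus\{B\})$.

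As you note, this makes citing Theorem~\ref{thm: forward-move sufficiency} and Lemma~\ref{lem: strong-move sufficiency} unnecessary. Induction over arbitrary move sequences gives $\mathcal F(K_n)=\{\Psi_n(\mathcal A)\}$ directly. It also recovers both results for $G=K_n$ as by-products: every graph is reached by $\Delta$-Y moves alone, and every Y-$\Delta$ move is strong.

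Your route is short and self-contained, and it makes the threshold transparent. For $n=7$, an old vertex lying in three blocks drops to degree $3$, which is exactly the Heawood example. The paper's route is much longer but gives its structural results for every $G$ with $\delta(G)\ge 7$, not only complete graphs.

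One small slip: your bound ends with $(n-1)/2\ge 4$, which fails at $n=8$. You need integrality of the degree, i.e.\ $\lceil 7/2\rceil=4$. Your remark on the labelling convention (original vertices labelled by $[n]$, created ones unlabelled) is apt; the paper uses this convention implicitly, for instance in the $S_n$-action.
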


Here, the isomorphism is canonical with respect to the labelling of vertices of the initial $K_n$ in the sense that once the vertices $V(K_n)$ are identified with $[n]$, no further choice of labelling is involved. The isomorphism is natural in the sense that it is $S_n$-equivariant.

This theorem is a consequence of the following phenomenon.

\begin{theorem}[forward-move sufficiency]
\label{thm: forward-move sufficiency}
Let $G$ be a graph with $\delta(G)\ge 7$, let $\mathcal{F}(G)$ denote the family of graphs obtained from $G$ via finite sequences of $\Delta$-Y and Y-$\Delta$ moves, and let $\mathcal F'(G)$ denote
the family of graphs obtained from $G$ by finite sequences of $\Delta$-Y
moves. Then
\[
    \mathcal F(G)=\mathcal F'(G).
\]
\end{theorem}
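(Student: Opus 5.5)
We need $\mathcal F(G)\subseteq\mathcal F'(G)$; the reverse inclusion is trivial. The plan is to show that every graph reachable by mixed moves is reachable by $\Delta$-Y moves alone. We do this by keeping track of a structural invariant, and by showing that any Y-$\Delta$ move applied to a graph in $\mathcal F'(G)$ simply undoes an earlier $\Delta$-Y move, or more precisely yields a graph that can be reached from $G$ by fewer $\Delta$-Y moves.

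\textbf{Step 1 (invariant).} For $H\in\mathcal F'(G)$, obtained by a sequence of $\Delta$-Y moves at triangles $T_1,\dots,T_k$, I would prove by induction the following description.
\begin{enumerate}
\item The vertices of $H$ are $V(G)$ together with new vertices $c_1,\dots,c_k$. Each $c_i$ has degree $3$, its neighbourhood is $T_i\subseteq V(G)$, and the new vertices form an independent set.
\item The triangles $T_i$ are edge-disjoint as triples in $V(G)$: no pair lies in two of them, so they form a partial Steiner system.
\item $E(H)=\bigl(E(G)\setminus\bigcup_i E(T_i)\bigr)\cup\{c_iv: v\in T_i\}$.
\end{enumerate}
For the induction step we must know where a triangle of $H$ can lie. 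A new vertex $c_i$ has only old neighbours, and those neighbours are pairwise nonadjacent in $H$, since the edges of $T_i$ were deleted. So $c_i$ lies in no triangle, every triangle of $H$ lies inside $V(G)$, and its edges avoid all the $T_i$. This gives item 2 after the next move.

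A second consequence is a degree count. An old vertex $v$ loses $2$ edges and gains $1$ for each $T_i$ containing $v$. Since the $T_i$ through $v$ are pairwise edge-disjoint, $v$ lies in at most $\deg_G(v)/2$ of them. Hence
\[
\deg_H(v)\ \ge\ \deg_G(v)-\deg_G(v)/2\ =\ \deg_G(v)/2\ \ge\ 7/2,
\]
so $\deg_H(v)\ge 4$.

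\textbf{Step 2 (Y-$\Delta$ moves only undo).} Let $H\in\mathcal F'(G)$ and let $w$ be a degree-$3$ vertex of $H$. By Step 1, every old vertex has degree at least $4$, so $w=c_i$ for some $i$. Its neighbourhood $T_i$ is independent in $H$, so the Y-$\Delta$ move at $w$ creates no parallel edges; all three conventions therefore agree. The result is exactly the graph obtained from the sequence $T_1,\dots,\widehat{T_i},\dots,T_k$, with $T_i$ omitted.

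That graph lies in $\mathcal F'(G)$. The reason is that item 3 shows $H$ depends only on the set $\{T_j\}$ and not on the order of the moves. Moreover, any partial Steiner system consisting of triangles of $G$ can be realised by $\Delta$-Y moves in any order: at each stage the next $T_j$ is still a triangle, because its edges were not removed by the other, edge-disjoint triangles.

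\textbf{Step 3 (conclude).} Induct on the length of a mixed move sequence. Step 2 shows that $\mathcal F'(G)$ is closed under Y-$\Delta$ moves, and it is closed under $\Delta$-Y moves by definition. Since $G\in\mathcal F'(G)$, we get $\mathcal F(G)\subseteq\mathcal F'(G)$.

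The main obstacle is the degree bound in Step 1 that rules out Y-$\Delta$ moves at original vertices. The halving bound above gives at least $4$, so the hypothesis $\delta\ge 7$ is used exactly to force $\deg_H(v)\ge 4$. I would double-check this count carefully, including parity: for example, $\deg_G(v)=7$ allows at most $3$ triangles through $v$, leaving degree $7-3=4$. I would also check the claim that new vertices never lie in triangles, since it is what keeps the invariant closed under further $\Delta$-Y moves.
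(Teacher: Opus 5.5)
Your proof is correct, and it takes a shorter route than the paper.

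\textbf{The paper's route.} It first proves the strong-move sufficiency lemma, $\mathcal F_{\mathrm{str}}(G)=\mathcal F(G)$, for the whole mixed family. This uses a delicate minimal-counterexample analysis (Claims 1--2 and the forbidden configurations $A$, $B$, \dots). From it the paper derives the vertex-type separation proposition. Only then does it prove the theorem: it takes a graph in $\mathcal F(G)\setminus\mathcal F'(G)$ needing the fewest Y-$\Delta$ moves and reduces to a single final Y-$\Delta$ move at a created vertex $c$. It then commutes the $\Delta$-Y move that created $c$ to the end of the sequence, where the two moves cancel. The explicit partial-Steiner description, the triangle-blocks lemma, comes only afterwards.

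\textbf{Your route.} You reverse the order. You first get the explicit description of $\mathcal F'(G)$ by a direct induction over $\Delta$-Y moves alone, where no Y-$\Delta$ move can interfere. You then observe that $\mathcal F'(G)$ contains $G$ and is closed under both kinds of move. Your cancellation step is the same idea as the paper's commutation argument, but order-independence of your description makes it immediate.

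\textbf{What each approach buys.} The paper's proof of the theorem only applies the proposition to graphs reached by $\Delta$-Y moves alone, which is exactly the regime your induction covers. So the full strength of the strong-move lemma is not needed for this theorem. Conversely, once $\mathcal F(G)=\mathcal F'(G)$ is known, your invariant gives the strong-move lemma, the vertex-type separation proposition and the triangle-blocks lemma as corollaries. What the paper's route offers is strong-move sufficiency as a statement about the mixed system in its own right. The author conjectures this survives at $\delta\ge 6$, where $\mathcal F'(G)$ is not closed under Y-$\Delta$ moves (the $K_7$ example), so your closure strategy could not work there. As written, however, both arguments use $\delta\ge 7$ through the same halving bound $\deg_H v\ge \tfrac12\deg_G v$.
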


As a consequence, $\mathcal{G}(G)$ can be regarded as a graded directed graph with a vertex assigned to each graph $H$ and a forward edge given by each $\Delta$-Y move, where the grading is by vertex number $|V(H)|$.

The condition $\delta(G) \ge 7$ is sharp: a counterexample at minimum degree $6$ is given by the family generated by $K_7$, the famous Heawood family. See, for example, \cite{GoldbergMattmanNaimi2014}.

Indeed, label the points of $K_7$ by $0,1,\dots, 6$. We perform $\Delta$-Y moves on triangles $012, 034,056$ respectively and obtain three created vertices $a,b,c$. Then, vertex $0$ has degree $3$ with neighbourhood $N(0) = \{a,b,c\}$ being an independent set, and we can perform a Y-$\Delta$ move at $0$. In this way, the original vertex $0$ vanishes, so the resultant configuration cannot be obtained from $\Delta$-Y moves alone.

As a further remark, it is known that $\Delta$-Y moves preserve intrinsic knottedness, whereas Y-$\Delta$ moves do not in general \cite{GoldbergMattmanNaimi2014}. Consequently, by Theorem \ref{thm: forward-move sufficiency}, if $G$ is intrinsically knotted and $\delta(G)\ge 7$, then every graph in $\mathcal F(G)$ is intrinsically knotted.

\subsection*{Declaration of AI Use}
We used ChatGPT 5.5 and 5.6 Plus for the aid of searching the literature, exposition, and computation of concrete examples for inspiration.

\section{\texorpdfstring{$\Delta$-Y Systems}{Delta-Y Systems}}\label{sec:deltaWye}
By a graph, we mean a finite simple undirected graph. We call a $3$-cycle a triangle due to its geometric appeal. Similarly, we call an independent set of size $3$ an empty triangle. For a vertex $x$ in a graph $G$, we use $N(x)$ to denote the neighbourhood of $x$.

\begin{definition}[$\Delta$-Y and Y-$\Delta$ moves] \label{def:deltaY}
Let $G$ be a graph. If there exists a triangle $T \subset G$, we may perform a \emph{$\Delta$-Y move} on $G$ at $T$ by removing edges of $T$ and adding a vertex $x$ to $G$ adjacent to exactly the vertices of $T$. We denote the resultant graph by $\Delta(G,T)$. This induces a partial graph embedding, that is an injective graph homomorphism with domain being a subgraph of $G$, 
$$
\iota_T:G \rightharpoonup \Delta(G,T)
$$
with domain $G \setminus E(T)$, the graph obtained by deleting edges of $T$. 

Conversely, if there exists a degree-$3$ vertex $x\in G$, we may perform a \emph{Y-$\Delta$ move} on $G$ at $x$ by removing $x$ and adding edges such that neighbours of $x$ form a triangle. We denote the resultant graph by $Y(G,x)$.
This induces a partial graph embedding 
$$
\iota_x:G \rightharpoonup Y(G,x)
$$
with domain $G \setminus \{x\}$. 
\end{definition}

\begin{definition}[family generated by $\Delta$-Y and Y-$\Delta$ moves]\label{def:familyDeltaY}
Let $G$ be a graph. We define 
\begin{multline*}
\mathcal{F}(G) := \{ \text{graphs }H: H \text{ can be obtained from }G \\
\text{ via a finite sequence of } \Delta \text{-Y or Y-} \Delta \text{ moves} \}. 
\end{multline*}
Similarly, we define
$$
\mathcal{F}'(G) := \{ \text{graphs }H: H \text{ can be obtained from }G 
\text{ via a finite sequence of } \Delta \text{-Y} \text{ moves} \}. 
$$
\end{definition}

${\mathcal{F}}(G)$ is finite because $\Delta$-Y or Y-$\Delta$ move does not increase the number of edges or the number of connected components, and hence both the number of edges and vertices are bounded.

We need to impose a condition on a Y-$\Delta$ move for it to be a two-sided inverse to a $\Delta$-Y move.

\begin{definition}[strong Y-$\Delta$ moves] \label{def:strongDeltaY}
Let $G$ be a graph and $x\in G$ be a vertex of degree $3$. The Y-$\Delta$ move $G\mapsto Y(G,x)$ is called a \emph{strong Y-$\Delta$ move} if the neighbourhood of $x$ form an empty triangle. We use $\mathcal F_{\mathrm{str}}(G)$ to denote the graphs generated by $G$ via a finite sequence of $\Delta$-Y or strong Y-$\Delta$ moves.
\end{definition}

Clearly $\mathcal F_{\mathrm{str}}(G) \subset \mathcal{F}(G)$, so $\mathcal F_{\mathrm{str}}(G)$ is also finite. Our key auxiliary result, Lemma \ref{lem: strong-move sufficiency}, says that $\mathcal F(G) = \Fstr(G)$ if $\delta(G) \ge 7$.

\begin{definition}[graph of graphs]\label{def:G_n}
Given a graph $G$, we define $\mathcal G(G)$ to be the graph with vertex set $V(\mathcal G(G)): = \mathcal F_{\mathrm{}}(G)$ and an edge is assigned between $H \in  V(\mathcal G(G))$ and $K \in V(\mathcal G(G))$ if $K$ is obtainable from $H$ via a $\Delta$-Y or Y-$\Delta$ move.

Given a graph $G$, we define $\Gstr(G)$ to be the simple directed graph with vertex set $V(\Gstr(G)): = \mathcal F_{\mathrm{str}}(G)$ and a forward edge is assigned from $H \in  V(\Gstr(G))$ to $K \in V(\Gstr(G))$ if $K$ is obtainable from $H$ via a $\Delta$-Y move.

If $G = K_n$, we denote $\mathcal G(K_n) = \mathcal{G}(K_n)$ and $\Gstr(K_n) = \Gstr(n)$ in short. We denote $\underline{\Gstr}(G)$ to be the undirected graph associated to $\Gstr (G)$ and $\underline{\Gstr}(n)= \underline{\Gstr}(K_n)$.
\end{definition}

We need to distinguish between original vertices in $G$ and vertices created by $\Delta$-Y moves.

\begin{definition}[created vertex]\label{def:created vertex}
Let $H \in \mathcal F(G)$ and $y \in H$. If there exists a sequence
$$
G=H_0 \xrightharpoonup{M_0} \dots \xrightharpoonup{M_{n-1}} H_n = H
$$
of $\Delta$-Y or Y-$\Delta$ moves sending $G$ to $H$ and some $x \in \operatorname{dom}(M_{n-1}\circ \dots \circ M_0)$ such that $y = M_{n-1}\circ \dots \circ M_0 (x)$, then $y$ is said to be an \emph{original} vertex. Otherwise, $y$ is said to be a \emph{created} vertex.
\end{definition}

The following definition serves to identify a vertex with partial preimage.

\begin{definition}[vertex identification]\label{def:vertex identification}
Let $G,H$ be graphs and $f:G \rightharpoonup H$ be a partial graph embedding. Let $y \in H$. Suppose there exists $x\in G$ such that $f(x) = y$, then we identify $y$ with $x$ and say $y \in G$.
\end{definition}

\begin{definition}[subgraph with vertex restrictions]\label{def:subgraphRes}
Let $G,H$ be graphs and $x_1, \dots, x_n \in H$. We say $(H;x_1,\dots,x_n)$ is a \emph{subgraph} of $G$ or $(H;x_1,\dots,x_n) \subset G$ if there exists a graph embedding $\iota: H \to G$ such that $N(\iota(x_i)) \subset \iota(H)$ for every $i = 1,\dots,n$.
\end{definition}

This definition serves to specify that $x_1,\dots,x_n$ are incident to no extra edges except for those already in $H$.

\begin{lemma}[strong-move sufficiency]\label{lem: strong-move sufficiency}
Let $G$ be a graph with minimum degree $\delta(G)\ge 7$, then $\mathcal F_{\mathrm{str}}(G) = \mathcal{F}(G)$. That is to say, every Y-$\Delta$ move on any $H \in \mathcal F(G)$ is automatically strong.
\end{lemma}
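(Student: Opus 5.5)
The plan is to prove, by induction on the length of a move sequence, a structural invariant describing every graph in $\mathcal F(G)$. From this invariant it will follow at once that every degree-$3$ vertex is a created vertex whose neighbourhood is an empty triangle.

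\textbf{The invariant.} Say that $H$ is of \emph{Steiner type over $G$} if there is a collection $\mathcal A$ of pairwise edge-disjoint triangles of $G$ such that $H$ is obtained as follows:
\begin{itemize}
\item start from $G$ with all edges of all $T\in\mathcal A$ deleted;
\item add, for each $T\in\mathcal A$, a new vertex $c_T$ adjacent exactly to the three vertices of $T$.
\end{itemize}
Edge-disjoint means no pair of vertices of $G$ lies in two blocks, so $\mathcal A$ is a partial Steiner system on $V(G)$ whose blocks are triangles of $G$. Under the partial embeddings of Definition \ref{def:deltaY}, the vertices of $G$ are the original vertices and the $c_T$ are the created ones. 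The base case is $G$ itself, with $\mathcal A=\emptyset$.

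\textbf{Degrees and local structure.} Let $H$ be of Steiner type and let $v\in V(G)$ lie in $k$ blocks of $\mathcal A$. Each such block removes two edges at $v$ and contributes one edge to $c_T$. Hence $\deg_H(v)=\deg_G(v)-k$. Since the blocks through $v$ use disjoint pairs of edges at $v$, we have $2k\le \deg_G(v)$. Therefore $\deg_H(v)\ge \lceil \deg_G(v)/2\rceil\ge 4$, using $\delta(G)\ge 7$. So the degree-$3$ vertices of $H$ are exactly the created vertices $c_T$. Their neighbourhoods are empty triangles: an edge $xy$ of $T$ was deleted, and it cannot be present in $H$ because no other edges between original vertices are added. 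Consequently every Y-$\Delta$ move on $H$ is strong, so there are no parallel edges, and the three conventions agree.

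\textbf{Preservation under moves.}
\begin{itemize}
\item \emph{Y-$\Delta$ move.} It must occur at some $c_T$. It restores the three edges of $T$ and deletes $c_T$. The result is of Steiner type with $\mathcal A\setminus\{T\}$.
\item \emph{$\Delta$-Y move.} Note first that no created vertex lies in a triangle. Indeed, $c_T$ is adjacent only to original vertices, and those are pairwise nonadjacent. So every triangle $T'$ of $H$ consists of original vertices joined by edges of $G$ not lying in any block of $\mathcal A$. Thus $T'$ is a triangle of $G$ edge-disjoint from all blocks. The $\Delta$-Y move at $T'$ produces the Steiner-type graph for $\mathcal A\cup\{T'\}$.
\end{itemize}
Induction over an arbitrary finite sequence of $\Delta$-Y and Y-$\Delta$ moves now shows that every $H\in\mathcal F(G)$ is of Steiner type. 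Moreover, each Y-$\Delta$ move along the way is strong, which gives $\mathcal F(G)=\Fstr(G)$.

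\textbf{Main obstacle.} The point needing most care is the bookkeeping of vertex identities. We must check that, along any sequence, an original vertex is never deleted and a created vertex is never treated as original. This is exactly what the degree bound $\deg_H(v)\ge 4$ ensures, since no Y-$\Delta$ move can ever touch an original vertex. The inequality $2k\le\deg_G(v)$ is where $\delta\ge 7$ is used. With $\delta=6$ one could have $k=3$ and degree $3$, which is the $K_7$ example.
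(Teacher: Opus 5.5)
Your proof is correct, and it takes a genuinely different and considerably more direct route than the paper.

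The paper argues by minimal counterexample:
\begin{enumerate}
\item It picks $H\in\mathcal F_{\mathrm{str}}(G)$ at minimum distance from $G$ containing a non-strong configuration $(A;x)$.
\item For all graphs closer to $G$, it establishes a chain of local claims. Original vertices have degree $\ge 4$. Created vertices have degree $3$, lie in no triangle, are pairwise non-adjacent, and have invariant neighbourhoods.
\item It then runs a case analysis on which vertex or edges of $(A;x)$ the last move could have created.
\end{enumerate}
Only afterwards, via a commutation argument, does the paper reach forward-move sufficiency and the triangle-block description.

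You instead take that final triangle-block (partial Steiner) description as the inductive invariant. You then check directly that it is closed under both kinds of move. The hypothesis $\delta(G)\ge 7$ enters only through $\deg_H v=\deg_G v-k\ge\lceil \deg_G v/2\rceil\ge 4$. This is the same counting that drives the paper's Claim 1.

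Your route gains several things:
\begin{enumerate}
\item It is short and needs no case analysis.
\item It does not depend on the sequence-dependent notion of original versus created vertex, because the degree-$3$ vertices are identified structurally.
\item It yields several of the paper's later results in one stroke. Proposition \ref{prop:vertex-type_separation} is immediate. So is Theorem \ref{thm: forward-move sufficiency}: a Steiner-type graph is reached from $G$ by $\Delta$-Y moves on its blocks in any order, since edge-disjoint blocks remain triangles. So is Lemma \ref{lem: triangle blocks}, with $\mathcal A$ recovered as the neighbourhoods of the degree-$3$ vertices.
\end{enumerate}

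The paper's local, forbidden-configuration approach has one possible advantage: it does not require guessing the global invariant in advance. That could matter in a regime such as the conjectured $\delta\ge 6$ case, where no such clean description holds. As written, however, it relies on $\delta\ge 7$ at exactly the same point you do.
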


We remark that \cite{GoldbergMattmanNaimi2014} observed by exhaustive calculation that $\mathcal F(G) = \mathcal F_{\mathrm{str}}(G)$ for several concrete graphs $G$ such as $G = K_7$ or $K_{3,3,1,1}$, and this lemma resolves this issue in the regime where $\delta(G) \ge 7$ . We conjecture that the tight condition for this lemma is $\delta(G) \ge 6$.

\begin{proofof}{Lemma \ref{lem: strong-move sufficiency}}[Lemma \ref{lem: strong-move sufficiency}]
Note that $\mathcal F_{\mathrm{str}}(G) = \mathcal{F}(G)$ unless there exists $H \in \Fstr(G)$ that admits a non-strong Y-$\Delta$ move, so that $H$ contains a degree-$3$ vertex $x$ whose neighbourhood is not an empty triangle. That is to say, denoting $$A :=
\begin{tikzpicture}[baseline=(current bounding box.center), every node/.style={circle, draw, minimum size=7mm}]
  \node [fill=red!30] (x)  at (0, 0)  {$x$};
  \node (y1) at (-1, -1)  {$y_1$};
  \node (y2) at (1,-1)  {$y_2$};
  \node (y3) at (0,1)  {$y_3$};

  \draw (x)  -- (y1);
  \draw (x)  -- (y2);
  \draw (x)  -- (y3);
  \draw (y1) -- (y2);
\end{tikzpicture},$$ we have $(A;x) \subset H$.
Without loss of generality, assume that $H$ has minimum distance from $G$ in $\underline{\Gstr}(G)$ with $(A;x)\subset H$. We note that $G \neq H$ since $\delta(G) > 3$.

Let $H' \in N_{\underline{\Gstr}(G)}(H)$ such that $d_{\underline{\Gstr}}(G,H') = d_{\underline{\Gstr}}(G,H)-1$, then by minimality $(A;x) \not \subset H'$. This implies either a vertex or an edge is created by a $\Delta$-Y move or strong Y-$\Delta$ move from $H'$ to $H$. We will discuss which vertices or edges are created. 

We need the following claims first:

\noindent \textbf{Claim 1.} For every $L \in \Fstr (G)$ with $d_{\underline{\Gstr}}(G,L) < d_{\underline{\Gstr}}(G,H)$ and $v\in L$ original, we have $\deg v \ge 4$.

\noindent \textbf{Claim 2.} Let $L \in \Fstr (G)$ with $d_{\underline{\Gstr}}(G,L) < d_{\underline{\Gstr}}(G,H)$. Then, the neighbourhood of any created vertex is invariant under $\Delta$-Y or Y-$\Delta$ moves.

\begin{proofof}{Claim 1}[Claim 1]
    Suppose otherwise and let $\tilde{L}$ be a counterexample with minimum $\underline{\Gstr}$-distance from $G$. Then, $\tilde{L}$ has an original vertex $v$ of degree $\le 3$, so $\tilde{L} \neq G$. We now claim the following.

\noindent\textbf{Claim 1.1.} For every $J \in \Fstr (G)$ with $d_{\underline{\Gstr}}(G,J) \le d_{\underline{\Gstr}}(G,\tilde{L})$, if $c \in J$ is a created vertex, then $\deg c = 3$. 

\begin{proofof}{Claim 1.1}[Claim 1.1]
There exists a chain of $\Delta$-Y or strong Y-$\Delta$ moves of minimum length $
G=J_0 \xrightharpoonup{M_0} \dots \xrightharpoonup{M_{n-1}} J_n = J
$ from $G$ to $J$. Let $i$ be the minimal index such that $c\in J_i$, then $i > 0$. Then, we have $c \in J_j$ for all $j \ge i$. We denote $d_j := \deg_{J_j} c$ for $j\ge i$, so $d_i =3$. Suppose $d_j \le 2$ for some $j > i$, and without loss of generality let $j$ be the minimal index for such instance. Since the only way to reduce the degree of a vertex is to perform a $\Delta$-Y move on a triangle containing it, we have that two neighbours of $c$ in $J_{j-1}$ are adjacent. Thus, we have $(A;x) \hookrightarrow J_{j-1}$ sending $x \mapsto c$ and $M_{j-1} = \iota_T$ with vertices of $T$ given by $\{c = x, y_1,y_2\}$. This is impossible by minimality of $H$, since $d_{\underline{\Gstr}}(G,J_{j-1}) < d_{\underline{\Gstr}}(G,H)$. \\
For the other direction, suppose $d_j \ge 4$ for some $j > i$, and without loss of generality let $j$ be the minimal index for such instance. Since the only way for $c$ to gain a degree is to perform a Y-$\Delta$ transform at a neighbour of $c$, we have, for $B := $\begin{tikzpicture}[baseline=(current bounding box.center), every node/.style={circle, draw, minimum size=7mm}]
  \node [fill=red!30] (y3)  at (0, 1)  {$y_3$};
  \node [fill=red!30] (x)  at (0, 0)  {$x$};
  \node (y1) at (-1, -1)  {$y_1$};
  \node (y2) at (1,-1)  {$y_2$};
  \node (a) at (-1,2)  {$a$};
  \node (b) at (1,2) {$b$};

  \draw (x)  -- (y1);
  \draw (x)  -- (y2);
  \draw (y3)  -- (a);
  \draw (y3)  -- (b);
  \draw (y3) -- (x);
\end{tikzpicture}, that $(B;x,y_3) \hookrightarrow J_{j-1}$ sending $x \mapsto c$ and $M_{j-1} = \iota_{y_3}$. Note that $d_{\underline{\Gstr}}(G,J_{j-1}) < d_{\underline{\Gstr}}(G,J) \le d_{\underline{\Gstr}}(G,\tilde{L})$ by minimality of chain length, then, by minimality of $\tilde{L}$, as $\deg y_3 <4$, $y_3$ is a created vertex. Then, we take $k$ to be the minimal index such that $(B;c,y_3) \subset J_{k}$, so $k > 0$. Suppose that there is a vertex in $B$ that is created from $M_{k-1}: J_{k-1} \rightharpoonup J_k$. By symmetry, there are two cases to discuss: whether $a$ or $x$ is created from $M_{k-1}$. If $a$ is created, then there is a triangle $T$ in $J_{k-1}$ with $y_3 \in T$. Since after a $\Delta$-Y move at $T$, the vertices of $T$ form an independent set in $J_{k}$, we have that $b \not \in T$. As a consequence, the subgraph generated $T \cup \{b\}$ is a copy of $A$ in $J_{k-1}$, contradicting the minimality of $H$. For the other case if $x$ is created, we have that $y_1,y_2,y_3$ form a triangle in $J_{k-1}$, so that $y_1,y_2,y_3,a$ generate a copy of $A$ in $J_{k-1}$, contradicting the minimality of $H$. 

Now, suppose that no vertex is created by $M_{k-1}$, then $M_{k-1}$ is a Y-$\Delta$ move, since otherwise created edges are incident to created vertices. Suppose one edge is created by $M_{k-1}$. By the degree condition on $x$ and $y_3$, without loss of generality, the only possibility is that $M_{k-1}$ acts on the triangle generated by $\{a,y_3,b\}$. Then, $y_3$ has degree $2$ in $J_{k-1}$, and this is impossible by the above if $y_3$ is a created vertex and by the minimality of $\tilde{L}$ if $y_3$ is original.

Finally, if $c,c' \in J$ are adjacent created vertices, since both have degree $3$, the subgraph generated by $c,c'$ and their neighbours contains a copy of $B$, which contradicts the minimality of $H$ by the above argument.
\end{proofof}

\noindent\textbf{Claim 1.2.} For every $J \in \Fstr (G)$ with $d_{\underline{\Gstr}}(G,J) \le d_{\underline{\Gstr}}(G,\tilde{L})$, if $c \in J$ is a created vertex, then $c$ does not appear in a triangle.

\begin{proofof}{Claim 1.2}[Claim 1.2]
Suppose $c \in T \subset J$ for some triangle $T$. Since $\deg c = 3$ by Claim 1.1, the triangle $T$ together with the other edge of $c$ form a copy of $(A;x)$ in $J$, and this contradicts the minimality of $H$. 
\end{proofof}

\noindent \textbf{Claim 1.3.} For every $J \in \Fstr (G)$ with $d_{\underline{\Gstr}}(G,J) \le d_{\underline{\Gstr}}(G,\tilde{L})$ and $c,c'\in J$ created vertices, we have that $c,c'$ are not adjacent.

\begin{proofof}{Claim 1.3}[Claim 1.3]
By Claim 1.1, both $c,c'$ have degree $3$. By Claim 1.2, $c,c'$ have no common neighbours. Then, the subgraph generated by $c,c'$ and their neighbours contains a copy of $B$, which contradicts the minimality of $H$ by the argument at the end of the proof of Claim 1.1.
\end{proofof}

We go back to the proof of Claim 1. Consider the chain $
G=\tilde{L}_0 \xrightharpoonup{M_0} \dots \xrightharpoonup{M_{n-1}} \tilde{L}_n = \tilde{L}
$ as in the proof of Claim 1.1. We denote $d'_j :=\deg_{\tilde{L}_j}v$ for $0 \le j \le n$. Let $S \subset \{0,\dots,n-1\}$ be the subset of indices $j$ such that $d'_{j+1} < d'_j$. Then $d'_{j} - d'_{j+1} = 1$ and $M_j = \iota_{T_j}$, where $T_j$ is a triangle containing $v$ and two neighbours of $v$ that are original vertices, by Claim 1.2. Similarly, let $U \subset \{0,\dots,n-1\}$ be the subset of indices $j$ such that $d'_{j+1} > d'_j$. Then $d'_{j+1} - d'_{j} = 1$ and $M_j = \iota_{w_j}$, where $w_j$ is a degree-$3$ neighbour of $v$. By minimality of $\tilde{L}$, $w_j$ is a created vertex, so that the neighbours of $w_j$ are original vertices by Claim 1.3. 

Therefore, we have that $$\deg_{\tilde{L}} v = \deg_G v -|S| + |U|$$ and $$0 \le \# \text{original vertex neighbours of }v\in \tilde{L} \le \deg_G v - 2|S| + 2|U|.$$
As a result, $\deg_{\tilde{L}} v \ge \frac{1}{2} \deg_G v \ge \frac{7}{2}$, and this contradicts the choice of $v$ which has degree $\deg_{\tilde L}v \le 3$
\end{proofof}

Although Claims 1.1 to 1.3 above are local claims about the supposed counterexample $\tilde{L}$ for Claim 1, now as Claim 1 is proved, we can upgrade them into a more global version:

\noindent \textbf{Claim 1.1$'$.} For every $L \in \Fstr (G)$ with $d_{\underline{\Gstr}}(G,L) < d_{\underline{\Gstr}}(G,H)$, if $c \in L$ is a created vertex, then $\deg c = 3$.

\noindent \textbf{Claim 1.2$'$.} For every $L \in \Fstr (G)$ with $d_{\underline{\Gstr}}(G,L) < d_{\underline{\Gstr}}(G,H)$, if $c \in L$ is a created vertex, then $c$ does not appear in a triangle.

\noindent \textbf{Claim 1.3$'$.} Let $L \in \Fstr (G)$ with $d_{\underline{\Gstr}}(G,L) < d_{\underline{\Gstr}}(G,H)$. Then, created vertices in $L$ are not adjacent to each other.

\begin{proofof}{Claims 1.1$'$ to 1.3$'$}[Claims 1.1$'$ to 1.3$'$]
Go through the same argument as in the proofs of Claims 1.1 to 1.3, except for replacing ``by minimality of $\tilde{L}$" with Claim 1.
\end{proofof}

\begin{proofof}{Claim 2}[Claim 2]
There are two ways to alter the neighbourhood of $c$: to perform a $\Delta$-Y move on a triangle containing $c$ or a Y-$\Delta$ move on a neighbour of $c$. The former is impossible by Claim 1.2$'$. In the latter case, we have $(B;x,y_3) \subset L$ as in the proof of Claim 1.1, and this is impossible following the same argument above, replacing ``by minimality of $\tilde{L}$" with Claim 1.
\end{proofof}
We return to the main proof of Lemma \ref{lem: strong-move sufficiency}. Recall that $H$ has minimum distance from $G$ such that $(A;x) \subset H$. We will show that such minimality is contradicted.

Suppose some vertex is created from $H'$ to $H$, then the move $M:H' \rightharpoonup H$ must be a $\Delta$-Y move. Since created vertices have an independent neighbourhood, the possible case is where $y_3$ is created. Then we have $(C;x) \subset H'$ or $(D;x) \subset H'$, where \\$C := $ \begin{tikzpicture}[baseline=(current bounding box.center), every node/.style={circle, draw, minimum size=7mm}]
  \node [fill=red!30] (x)  at (0, 0)  {$x$};
  \node (y1) at (-1, -1)  {$y_1$};
  \node (y2) at (1,-1)  {$y_2$};
  \node (a) at (-1,1)  {$a$};
  \node (b) at (1,1) {$b$};

  \draw (x)  -- (y1);
  \draw (x)  -- (y2);
  \draw (x)  -- (a);
  \draw (x)  -- (b);
  \draw (y1) -- (y2);
  \draw (a) -- (b);
\end{tikzpicture}
and $D := $
\begin{tikzpicture}[baseline=(current bounding box.center), every node/.style={circle, draw, minimum size=7mm}]
  \node [fill=red!30] (x)  at (0, 0)  {$x$};
  \node (y1) at (-1, -1)  {$y_1$};
  \node (y2) at (1,-1)  {$y_2$};
  \node (a) at (-1,1)  {$a$};

  \draw (x)  -- (y1);
  \draw (x)  -- (y2);
  \draw (x)  -- (a);
  \draw (y1)  -- (a);
  \draw (y1) -- (y2);
\end{tikzpicture}. The latter case contradicts the minimality of $H$ because $D$ contains a copy of $A$ with $y_3 \mapsto a$. In the former case, since $\deg x \ge 4$, $x$ is an original vertex. Since $\delta(G)\ge 7$ and by the proof of Claim 1, for $\deg_{H'}x = 4$ to happen, we must have that at least three neighbours of $x$ are created vertices. Since created vertices are not adjacent to each other by Claim 1.3$'$, this is impossible. 

Now suppose no vertex is created, then $M$ must be a Y-$\Delta$ move, since any edge being created by a $\Delta$-Y move has an incident vertex being created by the same move.
If three edges are created in forming $(A;x)$, then the edges created must be $\{y_1y_2,y_2y_3,y_3y_1\}$.

In this case, we have $(E;x,a) \subset H'$, where $E := $\begin{tikzpicture}[baseline=(current bounding box.center), every node/.style={circle, draw, minimum size=7mm}]
  \node [fill=red!30] (y3)  at (0, 1)  {$x$};
  \node [fill=red!30] (x)  at (0, 0)  {$a$};
  \node (y1) at (-1, -1)  {$y_1$};
  \node (y2) at (1,-1)  {$y_2$};
  \node (a) at (0,2)  {$y_3$};

  \draw (x)  -- (y1);
  \draw (x)  -- (y2);
  \draw (y3)  -- (a);
  \draw (y3) -- (x);
\end{tikzpicture}.  This is impossible since $\delta(H') \ge 3$ by Claim 1 and Claim 1.1$'$, whereas $x$ has degree only $2$.

If two edges are created, we discuss cases. If $y_3 x$ is one of the edges being created, then the other edge is $y_2x$ or $y_1x$ since created edges are adjacent. Assuming without loss of generality that $y_3x$ and $y_1x$ are created, then the move $M:H' \rightharpoonup H$ is a Y-$\Delta$ move because $x$ cannot be created from $M$. Then, for $F := $
\begin{tikzpicture}[baseline=(current bounding box.center), every node/.style={circle, draw, minimum size=7mm}]
  \node [fill=red!30] (x)  at (1, 0)  {$x$};
  \node (y1) at (-1, -1)  {$y_1$};
  \node (y2) at (1,-1)  {$y_2$};
  \node [fill=red!30] (a) at (-1,0)  {$a$};
  \node (y3) at (-1,1) {$y_3$};

  \draw (x)  -- (y2);
  \draw (x)  -- (a);
  \draw (a)  -- (y1);
  \draw (y1) -- (y2);
  \draw (a) -- (y3);
\end{tikzpicture}, we have $(F;a,x) \subset H'$, and this is again excluded by the degree of $x$. If $y_2x$ and $y_1 x$ are created, then $M$ is a Y-$\Delta$ move creating triangle generated by $\{x,y_1,y_2\}$, and we return to the case where three edges are created. The case is similar if $y_2x$  and $y_1 y_2$ are created.

Suppose only one edge is created. If $y_1y_2$ is created, then for $A_1 := $\begin{tikzpicture}[baseline=(current bounding box.center), every node/.style={circle, draw, minimum size=7mm}]
  \node [fill=red!30] (x)  at (0, 0)  {$x$};
  \node (y1) at (-1, -1)  {$y_1$};
  \node (y2) at (1,-1)  {$y_2$};
  \node (y3) at (0,1)  {$y_3$};
  \node [fill=red!30] (a) at (0,-2)  {$a$};
  \node (b) at (0,-3) {$b$};

  \draw (x)  -- (y3);
  \draw (x)  -- (y2);
  \draw (y2)  -- (a);
  \draw (a)  -- (y1);
  \draw (x) -- (y1);
  \draw (a) -- (b);
\end{tikzpicture}
, $A_2 := $ \begin{tikzpicture}[baseline=(current bounding box.center), every node/.style={circle, draw, minimum size=7mm}]
  \node [fill=red!30] (x)  at (0, 0.5)  {$x$};
  \node (y1) at (-1, -1)  {$y_1$};
  \node (y2) at (1,-1)  {$y_2$};
  \node (y3) at (0,-1)  {$y_3$};
  \node [fill=red!30] (a) at (0,-2.5)  {$a$};

  \draw (x)  -- (y3);
  \draw (x)  -- (y2);
  \draw (y2)  -- (a);
  \draw (a)  -- (y1);
  \draw (x) -- (y1);
  \draw (a) -- (y3);
\end{tikzpicture}, and $A_3 := $ \begin{tikzpicture}[baseline=(current bounding box.center), every node/.style={circle, draw, minimum size=7mm}]
  \node [fill=red!30] (x)  at (0, 0)  {$x$};
  \node (y1) at (-1, -1)  {$y_1$};
  \node (y2) at (1,-1)  {$y_2$};
  \node (y3) at (0,1)  {$y_3$};
  \node [fill=red!30] (a) at (0,-2)  {$a$};

  \draw (x)  -- (y3);
  \draw (x)  -- (y2);
  \draw (y2)  -- (a);
  \draw (a)  -- (y1);
  \draw (x) -- (y1);
  \draw (a) -- (x);
\end{tikzpicture},
we have $(A_i;x,a) \subset H'$ for $i=1,2, \text{ or }3$ and $M = \iota_a$ in each case. Case $i=3$ is impossible by minimality of $H$ because $A_3$ contains a copy of $A$ with centre $x \mapsto a$. For case $i=1$ or $2$, since $\deg x = \deg a =3$, both $x$ and $a$ are created vertices. Let $
G=H_0 \xrightharpoonup{M_0} \dots \xrightharpoonup{M_{n-1}} H_n = H
$ be a chain of $\Delta$-Y or strong Y-$\Delta$ moves of minimum length from $G$ to $H$. Then, by symmetry, we may assume that $x$ is created before $a$, say $x$ is created at $M_{j}$ and $a$ is created at $M_k$ for $j<k$. Since the neighbourhood of created vertices is invariant under moves by Claim 2, we have, for $A_4 := $\begin{tikzpicture}[baseline=(current bounding box.center), every node/.style={circle, draw, minimum size=7mm}]
  \node [fill=red!30] (x)  at (0, 0)  {$x$};
  \node (y1) at (-1, -1)  {$y_1$};
  \node (y2) at (1,-1)  {$y_2$};
  \node (y3) at (0,1)  {$y_3$};
  \node (b) at (0,-3) {$b$};

  \draw (x)  -- (y3);
  \draw (x)  -- (y2);
  \draw (y2)  -- (y1);
  \draw (b)  -- (y1);
  \draw (x) -- (y1);
  \draw (y2) -- (b);
\end{tikzpicture}
and $A_5 := $
\begin{tikzpicture}[baseline=(current bounding box.center), every node/.style={circle, draw, minimum size=7mm}]
  \node [fill=red!30] (x)  at (0, 0)  {$x$};
  \node (y1) at (-1, -1)  {$y_1$};
  \node (y2) at (1,-1)  {$y_2$};
  \node (y3) at (0,1.5)  {$y_3$};

  \draw (x)  -- (y3);
  \draw (x)  -- (y2);
  \draw (y2)  -- (y1);
    \draw (y3)  -- (y1);
      \draw (y2)  -- (y3);
  \draw (x) -- (y1);
\end{tikzpicture}, we have $(A_4;x) \subset H_k$ in case $i=1$ and $(A_5;x) \subset H_k$ in case $i=2$. Both cases are rejected by minimality of $H$.\\

If $xy_2$ is created by $M$, then for $B_1 := $
\begin{tikzpicture}[baseline=(current bounding box.center), every node/.style={circle, draw, minimum size=7mm}]
  \node [fill=red!30] (x)  at (1, 0)  {$x$};
  \node (y1) at (-1, -1)  {$y_1$};
  \node (y2) at (1,-1)  {$y_2$};
  \node (y3) at (1,1)  {$y_3$};
  \node [fill=red!30] (a) at (-1,0)  {$a$};
  \node (b) at (-1,1) {$b$};

  \draw (x)  -- (y3);
  \draw (x)  -- (y2);
  \draw (x)  -- (a);
  \draw (a)  -- (y1);
  \draw (y1) -- (y2);
  \draw (a) -- (b);
\end{tikzpicture} and $B_2 := $
\begin{tikzpicture}[baseline=(current bounding box.center), every node/.style={circle, draw, minimum size=7mm}]
  \node [fill=red!30] (x)  at (1, 0)  {$x$};
  \node (y1) at (-1, -1)  {$y_1$};
  \node (y2) at (1,-1)  {$y_2$};
  \node (y3) at (1,1)  {$y_3$};
  \node [fill=red!30] (a) at (-1,0)  {$a$};

  \draw (x)  -- (y3);
  \draw (x)  -- (y2);
  \draw (x)  -- (a);
  \draw (a)  -- (y1);
  \draw (y1) -- (y2);
  \draw (a) -- (y3);
\end{tikzpicture} and $B_3 :=$
\begin{tikzpicture}[baseline=(current bounding box.center), every node/.style={circle, draw, minimum size=7mm}]
  \node [fill=red!30] (x)  at (1, 0)  {$x$};
  \node (y1) at (-1, -1)  {$y_1$};
  \node (y2) at (1,-1)  {$y_2$};
  \node (y3) at (1,1)  {$y_3$};
  \node [fill=red!30] (a) at (-1,0)  {$a$};

  \draw (x)  -- (y3);
  \draw (x)  -- (y2);
  \draw (x)  -- (a);
  \draw (a)  -- (y1);
  \draw (y1) -- (y2);
  \draw (a) -- (y2);
\end{tikzpicture}
  , we have $(B_i;a,x) \subset H'$ for some $i = 1,2,3$. In each case, we have $M = \iota_a$. Since $\deg a = \deg x = 3$, both $a$ and $x$ are created vertices. By Claim 1.3$'$, it is impossible for $a$ to be adjacent to $x$, thus we have a contradiction.\\

If $xy_3$ is created, then for $C_1 := $\begin{tikzpicture}[baseline=(current bounding box.center), every node/.style={circle, draw, minimum size=7mm}]
  \node [fill=red!30] (a)  at (0, 1)  {$a$};
  \node [fill=red!30] (x)  at (0, 0)  {$x$};
  \node (y1) at (-1, -1)  {$y_1$};
  \node (y2) at (1,-1)  {$y_2$};
  \node (y3) at (-1,2)  {$y_3$};
  \node (b) at (1,2) {$b$};

  \draw (x)  -- (y1);
  \draw (x)  -- (y2);
  \draw (y3) -- (a);
    \draw (y1) -- (y2);
  \draw (a)  -- (b);
  \draw (a) -- (x);
\end{tikzpicture} and $C_2 := $
\begin{tikzpicture}[baseline=(current bounding box.center), every node/.style={circle, draw, minimum size=7mm}]
  \node [fill=red!30] (a)  at (0, 1)  {$a$};
  \node [fill=red!30] (x)  at (0, 0)  {$x$};
  \node (y1) at (-1, -1)  {$y_1$};
  \node (y2) at (1,-1)  {$y_2$};
  \node (y3) at (-1,2)  {$y_3$};

  \draw (x)  -- (y1);
  \draw (x)  -- (y2);
  \draw (y3) -- (a);
    \draw (y1) -- (y2);
  \draw (a)  -- (y2);
  \draw (a) -- (x);
\end{tikzpicture}
, we have $(C_1;a,x)\subset H'$ or $(C_2;a,x)\subset H'$, and $M = \iota_a$ in either case. Thus, in either case, we have a contradiction to the minimality of $H$.\\

The proof is complete.
\end{proofof}

As the lemma is established, the claims established in the proof of it are in fact true for any graph in $\mathcal F_{\mathrm{str}}(G)$, and this is summarised in Proposition \ref{prop:vertex-type_separation}.

\begin{proposition}[vertex-type separation]\label{prop:vertex-type_separation}
Let $G$ be a graph with $\delta(G)\ge 7$, and let $H\in\mathcal F(G)$.
Then:
\begin{enumerate}[(i)]
    \item \label{prop1-1}every original vertex of $H$ has degree at least $4$;
    \item \label{prop1-2}every created vertex of $H$ has degree $3$;
    \item \label{prop1-3}no created vertex of $H$ lies in a triangle;
    \item \label{prop1-4}no two created vertices of $H$ are adjacent.
    \item \label{prop1-5}the neighbourhood of every created vertex of $H$ is invariant
    under $\Delta$-Y and Y-$\Delta$ moves given that the vertex exists after the move;
\end{enumerate}
\end{proposition}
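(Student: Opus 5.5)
The plan is to re-read the proof of Lemma \ref{lem: strong-move sufficiency} as an unconditional statement about all of $\Fstr(G)$. By Lemma \ref{lem: strong-move sufficiency}, $\mathcal F(G)=\Fstr(G)$, so it suffices to prove (i)--(v) for $H\in\Fstr(G)$.

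The observation driving everything is this. In the proof of the lemma, Claims 1, 1.1$'$--1.3$'$ and 2 were established for every $L\in\Fstr(G)$ with $d_{\underline{\Gstr}}(G,L)<d_{\underline{\Gstr}}(G,H_0)$. Here $H_0$ is a hypothetical graph of minimum distance containing $(A;x)$. The only input used was that no graph closer to $G$ than $H_0$ contains a copy of $(A;x)$. Since the lemma shows no such $H_0$ exists, the hypothesis ``$(A;x)\not\subset L$'' holds for \emph{every} $L\in\Fstr(G)$. I would therefore formally restate the auxiliary hypothesis as the following statement:
\[
(\ast)\quad (A;x)\not\subset L \text{ for all } L\in\Fstr(G).
\]
This follows directly from the lemma, because $(A;x)\subset L$ means a degree-$3$ vertex has a non-independent neighbourhood, i.e.\ $L$ admits a non-strong Y-$\Delta$ move.

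With $(\ast)$ in hand, I would rerun the arguments in order, each time replacing ``by minimality of $H$'' with $(\ast)$.
\begin{enumerate}
\item Item (i) is Claim 1. Its proof is an induction on distance: take a counterexample $\tilde L$ at minimal distance, derive Claims 1.1--1.3 for graphs no farther than $\tilde L$, then do the degree count $\deg_{\tilde L} v\ge \tfrac12\deg_G v\ge 7/2$. This now runs with no upper bound on the distance.
\item Items (ii), (iii) and (iv) then follow as in Claims 1.1$'$, 1.2$'$ and 1.3$'$, using (i) in place of minimality of $\tilde L$.
\item Item (v) is Claim 2. Its neighbourhood can change in only two ways. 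A $\Delta$-Y move on a triangle through the created vertex is excluded by (iii). A Y-$\Delta$ move at a degree-$3$ neighbour $y_3$ would produce $(B;x,y_3)$, which is excluded by the argument at the end of Claim 1.1. More directly, by (i) the vertex $y_3$ must be created, contradicting (iv).
\end{enumerate}

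The step needing the most care is making sure nothing in those proofs secretly used the distance bound for something other than excluding copies of $A$. In particular, the chains of minimal length used in Claim 1.1 must still make sense. They do, because the induction in Claim 1 is over the distance of $\tilde L$ itself, which is internal to that proof. I would check each invocation of ``minimality'' and confirm it refers either to $(\ast)$ or to the internal minimality of $\tilde L$. Once that is verified, the proposition follows.
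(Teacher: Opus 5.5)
Your proposal is correct and follows the paper's proof. The paper likewise invokes Lemma~\ref{lem: strong-move sufficiency} to replace every appeal to ``minimality of $H$'' with the global absence of $(A;x)$, reruns Claim~1 through a minimal counterexample $\tilde L$ (with Claims~1.1--1.3 holding up to its distance), upgrades to Claims~1.1$'$--1.3$'$ for (ii)--(iv), and obtains (v) from the proof of Claim~2; your shortcut for (v) is a valid, slightly cleaner finish: a Y-$\Delta$ move at a neighbour $y_3$ of a created vertex needs $\deg y_3=3$, so $y_3$ is created by (i), contradicting (iv).
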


\begin{proof}
Note that this proposition is the collection of results in Claims 1, 1.1$'$, 1.2$'$, 1.3$'$, and 4 without local assumptions. By Lemma \ref{lem: strong-move sufficiency}, $\mathcal F(G) = \Fstr(G)$. Now, we let $\tilde{L}$ be a counterexample of part \ref{prop1-1} with minimum $\underline{\Gstr}$-distance from $G$. Then, similar to Claim 1.1, we show that part (\ref{prop1-2}) holds for every graph $J \in \Fstr(G)$ with $d_{\underline{\Gstr}}(G,J) \le d_{\underline{\Gstr}}(G,\tilde{L})$. The proof is the same as the proof of Claim 1.1 except we replace ``minimality of $H$" with Lemma \ref{lem: strong-move sufficiency}, as $A$ is not contained in any graph in $\Fstr(G)$ by the lemma. Then, we show parts \ref{prop1-2}, \ref{prop1-3}, \ref{prop1-4} hold for $J$ as in the claims. This will show \ref{prop1-1} in general, provided that we replace ``minimality of $H$" with Lemma \ref{lem: strong-move sufficiency}. Then, as in the proofs of Claims 1.1$'$ to 1.3$'$, we can remove local assumptions and show parts \ref{prop1-2}, \ref{prop1-3}, \ref{prop1-4} in general. Finally, part \ref{prop1-5} is obtained by the proof of Claim 2 with ``minimality of $H$" replaced by Lemma \ref{lem: strong-move sufficiency}.
\end{proof}

As a result, the original and created vertices are separated by degrees. Moreover, in such cases, the family $\mathcal{F}(G)$ can be generated by $\Delta$-Y moves alone without using any Y-$\Delta$ moves. This is Theorem \ref{thm: forward-move sufficiency}.

\begin{proofof}{Theorem \ref{thm: forward-move sufficiency}}
Suppose otherwise that $\mathcal F(G)\setminus \mathcal F'(G)$ is non-empty. For each $H \in \mathcal F(G)\setminus \mathcal F'(G)$, we define $y(H)$ to be the least number of Y-$\Delta$ moves needed to obtain $H$ from $G$ via a sequence of $\Delta$-Y or $Y$-$\Delta$ moves. Then, we have that 
$$
\min_{H \in \mathcal F(G)\setminus \mathcal F'(G)} y(H) = 1.
$$
Indeed, let $H \in \mathcal F(G)\setminus \mathcal F'(G)$ attain this minimum. Then, there is a chain $G=H_0 \xrightharpoonup{M_0} \dots \xrightharpoonup{M_{n-1}} H_n = H$ of $\Delta$-Y or Y-$\Delta$ moves such that there are $y(H)$ Y-$\Delta$ moves among $\{M_0, \dots,M_{n-1}\}$. Suppose $M_i$ is the first Y-$\Delta$ move, then we must have $H_{i+1} \in \mathcal F(G)\setminus \mathcal F'(G)$, otherwise we may replace the subchain $G=H_0 \xrightharpoonup{M_0} \dots \xrightharpoonup{M_{i}} H_{i+1}$ with a sequence of $\Delta$-Y moves, yielding a chain to $H$ with a smaller number of $Y$-$\Delta$ moves than $y(H)$, thus giving a contradiction. Then, since $y(H_{i+1}) \le 1$, we must have $\min_{H \in \mathcal F(G)\setminus \mathcal F'(G)} y(H) \le 1$, and hence the number is exactly 1 by definition.

Now, pick some $H \in \mathcal F(G)\setminus \mathcal F'(G)$ with $y(H) = 1$. Let $G=H_0 \xrightharpoonup{M_0} \dots \xrightharpoonup{M_{n-1}} H_n = H$ be a chain of the least number of Y-$\Delta$ moves from $G$ to $H$. Then, without loss of generality, by the argument above, we may assume that the only Y-$\Delta$ move is $M_{n-1}$. Since the only vertices of degree 3 are created vertices in $H_{n-1}$, there exists a created vertex $c \in H_{n-1}$ such that $M_{n-1} = \iota_{c}$. 

Since all moves $M_0,\dots,M_{n-2}$ are $\Delta$-Y moves, there exists a unique move $M_i$ in which $c$ is created. For $j = 0,\dots,n-2$, we denote $M_j = \iota_{T_j}$, where $T_j$ is the triangle in $H_j$ on which $M_j$ is applied. By part \ref{prop1-5} of Proposition \ref{prop:vertex-type_separation}, the neighbourhood of $c$ is invariant under $M_{i+1},\dots,M_{n-2}$. Moreover, since the neighbourhood $N(c)$ of $c$ is independent, we have that $|T_j \cap N(c)| \le 1$ for every $j = i+1,\dots, n-2$. This shows that $M_i$ commutes with $M_{i+1},\dots, M_{n-2}$. Therefore, $H$ is obtained from the sequence $M_0,\dots,M_{i-1}, M_{i+1},\dots,M_{n-2},M_{i},M_{n-1}$. Since $M_i$ and $M_{n-1}$ are inverses to each other, $H$ is obtained from a sequence of $\Delta$-Y moves $M_0,\dots,M_{i-1}, M_{i+1},\dots,M_{n-2}$, contradicting that $H \not \in \mathcal F'(G)$.

\end{proofof}

Finally, we remove local assumptions and record the following lemma.

\begin{lemma}[triangle blocks]
\label{lem: triangle blocks}
Let $G$ be a graph with $\delta(G)\ge 7$, and let $H\in\mathcal F(G)$. Then, there is a sequence of distinct edge-disjoint triangles $T_1, \dots, T_m$ in $G$ such that $H$ is obtained from $G$ via $\Delta$-Y moves on $T_1, \dots, T_m$. Moreover, the sequence is unique up to reordering.
\end{lemma}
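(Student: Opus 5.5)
By Theorem \ref{thm: forward-move sufficiency}, $H\in\mathcal F'(G)$. So there is a chain $G=H_0\xrightharpoonup{M_0}\cdots\xrightharpoonup{M_{m-1}}H_m=H$ with $M_j=\iota_{T_j}$, each $T_j$ a triangle of $H_j$. The plan has three steps. First, each $T_j$ lies in $G$, with all three vertices original. Second, the $T_j$ are distinct and edge-disjoint in $G$. Third, the set $\{T_1,\dots,T_m\}$ is determined by $H$ alone, which gives uniqueness up to reordering.

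For the first step I would use Proposition \ref{prop:vertex-type_separation}. Every vertex of $H_j$ is original or created, since each move in the chain is a $\Delta$-Y move. By part (iii), no created vertex lies in a triangle, so every vertex of $T_j$ is original, i.e.\ a vertex of $G$. Next, every edge of $H_j$ joining two original vertices is already an edge of $G$. Indeed, $\Delta$-Y moves only delete edges among original vertices; the only new edges are incident to the new created vertex. Hence $T_j$ is a triangle of $G$. This step is straightforward.

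For the second step, note that once $\Delta$-Y is applied at $T_j$, its three edges are deleted from $H_{j+1}$. No later move restores an edge between two original vertices, since there are no Y-$\Delta$ moves. So for $k>j$ the triangle $T_k\subset H_k$ shares no edge with $T_j$. This makes the $T_j$ pairwise edge-disjoint, and in particular distinct. Conversely, any ordering of edge-disjoint triangles of $G$ gives a valid chain, because each $T_k$ survives the earlier moves. This means the moves commute, which is the same reasoning as in the proof of Theorem \ref{thm: forward-move sufficiency}.

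For uniqueness, I would show that $\{T_1,\dots,T_m\}$ equals $\{N_H(c): c \text{ created in } H\}$, which is intrinsic to $H$ (up to the identification of original vertices with $V(G)$). The created vertex $c_j$ introduced by $M_j$ has neighbourhood $T_j$ in $H_{j+1}$. By Proposition \ref{prop:vertex-type_separation}(v), or directly because later $\Delta$-Y moves never use $c_j$ by (iii), this neighbourhood persists to $H$. The created vertices of $H$ are exactly $c_0,\dots,c_{m-1}$. So any two admissible sequences give the same set of triangles, namely the neighbourhoods of created vertices of $H$, and by distinctness they are reorderings of each other.

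The expected obstacle is bookkeeping: the original/created status and the identification of vertices are defined via arbitrary chains, possibly involving Y-$\Delta$ moves (Definition \ref{def:created vertex}). I would first check that for a pure $\Delta$-Y chain, "created" in that sense agrees with "introduced by some $M_j$". This needs part (i) of Proposition \ref{prop:vertex-type_separation}: degrees separate the two types ($\ge4$ versus $3$), so the type of a vertex does not depend on the chain chosen.
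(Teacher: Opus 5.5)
Your proposal is correct and follows essentially the paper's argument. You reduce to a pure $\Delta$-Y chain via Theorem \ref{thm: forward-move sufficiency}, use Proposition \ref{prop:vertex-type_separation}(iii) to put each $T_j$ on original vertices and hence in $G$, get edge-disjointness because deleted original--original edges never return, and obtain uniqueness from $\{T_1,\dots,T_m\}=\{N(c): c \text{ created}\}$. The paper packages the same steps as an induction on chain length, whereas you argue directly along one chain and also spell out the degree-separation check that chain-introduced vertices are created in the sense of Definition \ref{def:created vertex}, which the paper leaves implicit.
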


\begin{proof}
We induct on the number of moves needed to reach $H$ from $G$ via $\Delta$-Y moves. The base case $H = G$ is trivial. Now, suppose $H \neq G$. Let $$G=H_0 \xrightharpoonup{\iota_{T_1}} \dots \xrightharpoonup{\iota_{T_m}} H_m = H$$ be a shortest chain of $\Delta$-Y moves from $G$ to $H$. Since $H_{m-1}$ can be reached by a shorter chain, we apply the induction hypothesis and obtain that $T_1,\dots,T_{m-1}$ is the unique sequence of distinct edge-disjoint triangles in $G$ to reach $H_{m-1}$ by $\Delta$-Y moves. $T_m \subset H_{m-1}$ is clearly distinct from $T_1,\dots,T_{m-1}$ because the vertices of a triangle form an independent set after a $\Delta$-Y move, thus does not admit a further move on any of $T_1,\dots,T_{m-1}$. For the same reason, $T_m$ is edge-disjoint from each of $T_1,\dots, T_{m-1}$.

Thus, no $\Delta$-Y move in $T_1,\dots,T_{m-1}$ modifies any edge of $T_m$ in $H_{m-1}$. Since no created vertex is contained in $T_m$ by part \ref{prop1-3} of Proposition \ref{prop:vertex-type_separation}, we have that the vertices of $T_m$ are original vertices. As a result, $T_m$ is a triangle in $G$.

Thus, $\iota_{T_m}$ commutes with any $\iota_{T_i}$ with $i<m$. 

Finally, uniqueness follows from 
$$
\{T_1,\dots,T_m\} = \{N(c): c\in H \text{ is a created vertex}\}.
$$
\end{proof}

\section{Complete Graphs and Partial Steiner Systems}

\begin{definition}[graph of partial Steiner systems]
\label{def:graph of pss}
For each $n\ge 3$, we define $\mathcal P(n)$ to be a directed graph of vertices $V(\mathcal P(n)) := \{\text{partial Steiner systems on }n \text{ points}\}$, and a forward edge is assigned from $\mathcal A \in V(\mathcal P(n))$ to $\mathcal A' \in V(\mathcal P(n))$ if $\mathcal A \subset \mathcal A'$ and that $|\mathcal A' \setminus \mathcal A| =  1$. That is to say, $\mathcal A'$ can be obtained from $\mathcal A$ by adjoining a new block.

Then, $\mathcal P(n)$ is naturally graded by rank $r(\mathcal A):= |\mathcal A|$.
\end{definition}

\begin{proofof}{Theorem \ref{thm: partial Steiner system identification}}
We define the grading by $r_{\mathcal G}(H): =  |V(H)| - n$. By Lemma \ref{lem: triangle blocks}, every $H\in\mathcal{G}(K_n)$ is obtained from
$K_n$ by $\Delta$-Y moves on a collection
\[
\mathcal A_H=\{T_1,\dots,T_m\}
\]
of distinct edge-disjoint triangles, unique up to ordering. By
Proposition \ref{prop:vertex-type_separation} part \ref{prop1-4}, the created vertices are in
bijection with their neighbourhoods $T_i$. Hence
\[
\Phi_n(H)=\mathcal A_H\in\mathcal P(n),
\]
so $\Phi_n$ is well-defined.

Conversely, for $\mathcal A\in\mathcal P(n)$, define $\Psi_n(\mathcal A)$ by
\[
V(\Psi_n(\mathcal A))
=[n]\sqcup\{c_T:T\in\mathcal A\},
\]
where $N(c_T)=T$, and where $x,y\in[n]$ are adjacent precisely when
$\{x,y\}$ is contained in no block of $\mathcal A$. Since the blocks of
$\mathcal A$ are edge-disjoint, they may be replaced by $\Delta$-Y moves
in any order, and therefore $\Psi_n(\mathcal A)\in\mathcal{G}(K_n)$.

It follows immediately from the definitions that
\[
\Phi_n(\Psi_n(\mathcal A))=\mathcal A.
\]
Conversely, if $\Phi_n(H)=\mathcal A_H$, then the description above shows
that the created vertices and all original--original edges of $H$ are
exactly those prescribed in $\Psi_n(\mathcal A_H)$. Thus
\[
\Psi_n(\Phi_n(H))=H.
\]
Hence $\Phi_n$ is a bijection.

Moreover, a $\Delta$-Y move on a triangle $T$ corresponds exactly to
adjoining the block $T$, so $\Phi_n$ is an isomorphism of directed graphs.
If $|\mathcal A_H|=m$, then
\[
r_{\mathcal G}(H)=|V(H)|-n=m=|\mathcal A_H|
=r_{\mathcal P}(\Phi_n(H)),
\]
so it preserves the grading. Finally, for every $\sigma\in S_n$,
\[
\Phi_n(\sigma H)
=\{\sigma N(c):c\text{ created}\}
=\sigma\Phi_n(H),
\]
and hence $\Phi_n$ is $S_n$-equivariant.
\end{proofof}

\section*{Acknowledgement}
The author thanks his supervisor, Matthew de Courcy-Ireland, for inspiring this work through a discussion of the Colin de Verdière parameter and topological graph embeddings, and for his careful reading of the manuscript and helpful advice.


\addcontentsline{toc}{section}{References}
\bibliographystyle{alpha}
\bibliography{ref}
\end{document}